\numberwithin{equation}{section}
\theoremstyle{plain}
\newtheorem{theorem}{Theorem}[section]
\newtheorem{proposition}[theorem]{Proposition}
\theoremstyle{definition}
\newtheorem{remark}[theorem]{Remark}
\newtheorem{problem}[theorem]{Problem}
\begin{document}

\newcommand{\eq}{equation}
\newcommand{\real}{\ensuremath{\mathbb R}}
\newcommand{\comp}{\ensuremath{\mathbb C}}
\newcommand{\rn}{\ensuremath{{\mathbb R}^n}}
\newcommand{\tn}{\ensuremath{{\mathbb T}^n}}
\newcommand{\rnp}{\ensuremath{\real^n_+}}
\newcommand{\rnn}{\ensuremath{\real^n_-}}
\newcommand{\Rn}{\ensuremath{{\mathbb R}^{n-1}}}
\newcommand{\Zn}{\ensuremath{{\mathbb Z}^{n-1}}}
\newcommand{\no}{\ensuremath{\nat_0}}
\newcommand{\ganz}{\ensuremath{\mathbb Z}}
\newcommand{\zn}{\ensuremath{{\mathbb Z}^n}}
\newcommand{\zom}{\ensuremath{{\mathbb Z}_{\Om}}}
\newcommand{\zOm}{\ensuremath{{\mathbb Z}^{\Om}}}
\newcommand{\As}{\ensuremath{A^s_{p,q}}}
\newcommand{\Bs}{\ensuremath{B^s_{p,q}}}
\newcommand{\Fs}{\ensuremath{F^s_{p,q}}}
\newcommand{\Fsr}{\ensuremath{F^{s,\rloc}_{p,q}}}
\newcommand{\nat}{\ensuremath{\mathbb N}}
\newcommand{\Om}{\ensuremath{\Omega}}
\newcommand{\di}{\ensuremath{{\mathrm d}}}
\newcommand{\sn}{\ensuremath{{\mathbb S}^{n-1}}}
\newcommand{\Ac}{\ensuremath{\mathcal A}}
\newcommand{\Acs}{\ensuremath{\Ac^s_{p,q}}}
\newcommand{\Bc}{\ensuremath{\mathcal B}}
\newcommand{\Cc}{\ensuremath{\mathcal C}}
\newcommand{\cc}{{\scriptsize $\Cc$}${}^s (\rn)$}
\newcommand{\ccd}{{\scriptsize $\Cc$}${}^s (\rn, \delta)$}
\newcommand{\Fc}{\ensuremath{\mathcal F}}
\newcommand{\Lc}{\ensuremath{\mathcal L}}
\newcommand{\Mc}{\ensuremath{\mathcal M}}
\newcommand{\Ec}{\ensuremath{\mathcal E}}
\newcommand{\Pc}{\ensuremath{\mathcal P}}
\newcommand{\Efr}{\ensuremath{\mathfrak E}}
\newcommand{\Mfr}{\ensuremath{\mathfrak M}}
\newcommand{\Mbf}{\ensuremath{\mathbf M}}
\newcommand{\Dbb}{\ensuremath{\mathbb D}}
\newcommand{\Lbb}{\ensuremath{\mathbb L}}
\newcommand{\Pbb}{\ensuremath{\mathbb P}}
\newcommand{\Qbb}{\ensuremath{\mathbb Q}}
\newcommand{\Rbb}{\ensuremath{\mathbb R}}
\newcommand{\vp}{\ensuremath{\varphi}}
\newcommand{\hra}{\ensuremath{\hookrightarrow}}
\newcommand{\supp}{\ensuremath{\mathrm{supp \,}}}
\newcommand{\ssupp}{\ensuremath{\mathrm{sing \ supp\,}}}
\newcommand{\dist}{\ensuremath{\mathrm{dist \,}}}
\newcommand{\unif}{\ensuremath{\mathrm{unif}}}
\newcommand{\ve}{\ensuremath{\varepsilon}}
\newcommand{\vk}{\ensuremath{\varkappa}}
\newcommand{\vr}{\ensuremath{\varrho}}
\newcommand{\pa}{\ensuremath{\partial}}
\newcommand{\oa}{\ensuremath{\overline{a}}}
\newcommand{\ob}{\ensuremath{\overline{b}}}
\newcommand{\of}{\ensuremath{\overline{f}}}
\newcommand{\LA}{\ensuremath{L^r\!\As}}
\newcommand{\LcA}{\ensuremath{\Lc^{r}\!A^s_{p,q}}}
\newcommand{\LcdA}{\ensuremath{\Lc^{r}\!A^{s+d}_{p,q}}}
\newcommand{\LcB}{\ensuremath{\Lc^{r}\!B^s_{p,q}}}
\newcommand{\LcF}{\ensuremath{\Lc^{r}\!F^s_{p,q}}}
\newcommand{\Lf}{\ensuremath{L^r\!f^s_{p,q}}}
\newcommand{\La}{\ensuremath{\Lambda}}
\newcommand{\Lob}{\ensuremath{L^r \ob{}^s_{p,q}}}
\newcommand{\Lof}{\ensuremath{L^r \of{}^s_{p,q}}}
\newcommand{\Loa}{\ensuremath{L^r\, \oa{}^s_{p,q}}}
\newcommand{\Lcoa}{\ensuremath{\Lc^{r}\oa{}^s_{p,q}}}
\newcommand{\Lcob}{\ensuremath{\Lc^{r}\ob{}^s_{p,q}}}
\newcommand{\Lcof}{\ensuremath{\Lc^{r}\of{}^s_{p,q}}}
\newcommand{\Lca}{\ensuremath{\Lc^{r}\!a^s_{p,q}}}
\newcommand{\Lcb}{\ensuremath{\Lc^{r}\!b^s_{p,q}}}
\newcommand{\Lcf}{\ensuremath{\Lc^{r}\!f^s_{p,q}}}
\newcommand{\id}{\ensuremath{\mathrm{id}}}
\newcommand{\tr}{\ensuremath{\mathrm{tr\,}}}
\newcommand{\trd}{\ensuremath{\mathrm{tr}_d}}
\newcommand{\trL}{\ensuremath{\mathrm{tr}_L}}
\newcommand{\ext}{\ensuremath{\mathrm{ext}}}
\newcommand{\re}{\ensuremath{\mathrm{re\,}}}
\newcommand{\Rea}{\ensuremath{\mathrm{Re\,}}}
\newcommand{\Ima}{\ensuremath{\mathrm{Im\,}}}
\newcommand{\loc}{\ensuremath{\mathrm{loc}}}
\newcommand{\rloc}{\ensuremath{\mathrm{rloc}}}
\newcommand{\osc}{\ensuremath{\mathrm{osc}}}
\newcommand{\pr}{\pageref}
\newcommand{\wh}{\ensuremath{\widehat}}
\newcommand{\wt}{\ensuremath{\widetilde}}
\newcommand{\ol}{\ensuremath{\overline}}
\newcommand{\os}{\ensuremath{\overset}}
\newcommand{\Li}{\ensuremath{\overset{\circ}{L}}}
\newcommand{\Ai}{\ensuremath{\os{\, \circ}{A}}}
\newcommand{\Ci}{\ensuremath{\os{\circ}{\Cc}}}
\newcommand{\dom}{\ensuremath{\mathrm{dom \,}}}
\newcommand{\SA}{\ensuremath{S^r_{p,q} A}}
\newcommand{\SB}{\ensuremath{S^r_{p,q} B}}
\newcommand{\SF}{\ensuremath{S^r_{p,q} F}}
\newcommand{\Hc}{\ensuremath{\mathcal H}}
\newcommand{\Nc}{\ensuremath{\mathcal N}}
\newcommand{\Lci}{\ensuremath{\overset{\circ}{\Lc}}}
\newcommand{\bmo}{\ensuremath{\mathrm{bmo}}}
\newcommand{\BMO}{\ensuremath{\mathrm{BMO}}}
\newcommand{\cm}{\\[0.1cm]}
\newcommand{\Aa}{\ensuremath{\os{\, \ast}{A}}}
\newcommand{\Ba}{\ensuremath{\os{\, \ast}{B}}}
\newcommand{\Fa}{\ensuremath{\os{\, \ast}{F}}}
\newcommand{\Ha}{\ensuremath{\os{\, \ast}{H}}}
\newcommand{\Aas}{\ensuremath{\Aa{}^s_{p,q}}}
\newcommand{\Bas}{\ensuremath{\Ba{}^s_{p,q}}}
\newcommand{\Fas}{\ensuremath{\Fa{}^s_{p,q}}}
\newcommand{\Ca}{\ensuremath{\os{\, \ast}{{\mathcal C}}}}
\newcommand{\Cas}{\ensuremath{\Ca{}^s}}
\newcommand{\Car}{\ensuremath{\Ca{}^r}}
\newcommand{\bl}{$\blacksquare$}

\begin{center}
{\Large A note on Hausdorff--Young inequalities \\ in function spaces}
\\[1cm]
{Hans Triebel}
\\[0.2cm]
Institut f\"{u}r Mathematik\\
Friedrich--Schiller--Universit\"{a}t Jena\\
07737 Jena, Germany
\\[0.1cm]
email: hans.triebel@uni-jena.de
\end{center}

\begin{abstract}
\noindent This complements \cite{HST22}, \cite{HST23} and \cite{T22}. We use the same notation as there.
\end{abstract}

{\bfseries Keywords:} Fourier transform, weighted spaces, Hausdorff--Young inequalities

{\bfseries 2020 MSC:} 46E35

\section{Introduction}   \label{S1}
This note is not a paper or draft but a sketchy complement to \cite{HST22}, \cite{HST23} and \cite{T22}
 where we dealt with mapping properties of the Fourier transform $F$,
\begin{\eq}   \label{1.1}
(Ff)(\xi) = (2\pi)^{-n/2} \int_{\rn} e^{-ix \xi} f(x) \, \di x, \qquad f\in S'(\rn), \quad \xi \in \rn,
\end{\eq}
appropriately interpreted, between distinguished function spaces on \rn. We use the same notation as there. In particular,
\begin{\eq}   \label{1.2}
B^s_p (\rn) = B^s_{p,p} (\rn), \qquad s\in \real, \quad 1 \le p \le \infty,
\end{\eq}
are distinguished Besov spaces and
\begin{\eq}    \label{1.3}
H^s_p (\rn) = F^s_{p,2} (\rn), \qquad s\in \rn, \quad 1<p<\infty,
\end{\eq}
are their Sobolev counterparts. Let $w_\alpha (x) = (1+ |x|^2)^{\alpha/2}$, $x\in \rn$, $\alpha \in \real$, be the so--called admissible
weights. Then $A^s_p (\rn, w_\alpha)$, $A \in \{B,H \}$, $1\le p \le \infty$ (with $1<p<\infty$ for $H$--spaces), $s\in \real$, 
$\alpha \in \real$, are the weighted generalizations of the above spaces, normed by
\begin{\eq}   \label{1.4}
\| f \, | A^s_p (\rn, w_\alpha) \| = \| w_\alpha f \, |A^s_p (\rn) \|.
\end{\eq}
Let
\begin{\eq}     \label{1.5}
I_\alpha: \quad f \mapsto (w_\alpha \wh{f} )^\vee = (w_\alpha f^\vee )^\wedge, \qquad f\in S'(\rn), \quad \alpha \in \real,
\end{\eq}
be the well--known lifts in the above (unweighted) spaces $A^s_p (\rn)$, which can be extended to
\begin{\eq}   \label{1.6}
I_\alpha A^s_p (\rn, w_\beta) = A^{s- \alpha}_p (\rn, w_\beta),
\end{\eq}
\begin{\eq}   \label{1.7}
\| (w_\alpha \wh{f} )^\vee |A^{s- \alpha}_p (\rn, w_\beta ) \| \sim \|f \, | A^s_p (\rn, w_\beta) \|,
\end{\eq}
$\alpha \in \real$, $\beta \in \real$. Details about these weighted spaces and their indicated properties may be found in \cite[Theorem
6.5, pp.\,265--266]{T06} and have also been summarized in \cite[Remark 2.3]{HST22}. 

The mapping properties
\begin{\eq}  \label{1.8}
FS(\rn) = S(\rn), \qquad FS' (\rn) = S' (\rn)
\end{\eq}
and the classical Hausdorff--Young inequalities
\begin{\eq}   \label{1.9}
F: \quad L_{p'} (\rn) \hra L_p (\rn), \qquad 2 \le p \le \infty, \quad \frac{1}{p} + \frac{1}{p'} =1,
\end{\eq}
are cornerstones of Fourier analysis. In Section \ref{S2} we extend \eqref{1.9} to the above weighted spaces. Section \ref{S3} deals
with an application: We give a new proof of one of the crucial assertions about mapping properties of the Fourier transform as 
considered in \cite{HST23} and \cite{T22}.

\section{Hausdorff--Young inequalities}    \label{S2}
\subsection{Main assertions}   \label{S2.1}
We complement the isomorphic mapping \eqref{1.6}, \eqref{1.7},
\begin{\eq}          \label{2.1}
I_\gamma: \quad A^{s+\gamma}_p (\rn, w_\alpha) \hra A^s_p (\rn, w_\alpha)
\end{\eq}
by the isomorphic mapping
\begin{\eq}   \label{2.2}
W_\beta : \quad A^s_p (\rn, w_{\alpha + \beta} ) \hra A^s_p (\rn, w_\alpha),
\end{\eq}
where 
\begin{\eq}   \label{2.3}
W_\beta: \quad f \mapsto w_\beta f,
\end{\eq}
$A \in \{B,H \}$, $s\in \real$, $1\le p \le \infty$ ($1<p<\infty$ for $H$--spaces) and $\alpha, \beta, \gamma \in \real$. Furthermore
\begin{\eq}   \label{2.4}
F = W_{-\gamma} \circ I_{- \beta} \circ F \circ W_\beta \circ I_\gamma \qquad \text{in} \quad S'(\rn),
\end{\eq}
as has already been observed in \cite[(4.6)]{HST22}. By \eqref{1.9} and well--known embeddings one has
\begin{\eq}   \label{2.5}
F: \quad B^0_{p'} (\rn) \os{\id}{\hra} L_{p'} (\rn) \os{F}{\hra} L_p (\rn) \os{\id}{\hra} B^0_p (\rn),
\end{\eq}
$2\le p \le \infty$, $\frac{1}{p} + \frac{1}{p'}  =1$. 

\begin{theorem}   \label{T2.1}
{\em (i)} Let $2\le p \le \infty$, $\frac{1}{p} + \frac{1}{p'} =1$, $s\in \real$ and $\sigma \in \real$. Then $F$,
\begin{\eq}   \label{2.6}
F: \quad B^s_{p'} (\rn, w_\sigma) \hra B^\sigma_p (\rn, w_s),
\end{\eq}
is a continuous mapping.
\cm
{\em (ii)} Let $2\le p <\infty$, $\frac{1}{p} + \frac{1}{p'} =1$, $s\in \real$ and $\sigma \in \real$. Then $F$,
\begin{\eq}   \label{2.7}
F: \quad H^s_{p'} (\rn, w_\sigma) \hra H^\sigma_p (\rn, w_s),
\end{\eq}
is a continuous mapping.
\end{theorem}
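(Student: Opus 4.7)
The plan is to reduce everything to the base case \eqref{2.5} (respectively its Sobolev analogue $F:L_{p'}(\rn)\hra L_p(\rn)$) by exploiting the factorization identity \eqref{2.4} with appropriate parameters. The two auxiliary mappings $W_\beta$ and $I_\gamma$ are isomorphisms by \eqref{2.1} and \eqref{2.2}, so conjugating $F$ by $W_\beta\circ I_\gamma$ transports continuity from the base case to the weighted case.

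For part (i) I would choose $\gamma=s$ and $\beta=\sigma$ in \eqref{2.4}, giving
\begin{\eq} \label{2.8}
F = W_{-s}\circ I_{-\sigma}\circ F \circ W_\sigma \circ I_s.
\end{\eq}
Chasing a function $f\in B^s_{p'}(\rn,w_\sigma)$ through this chain, one has $I_s f\in B^0_{p'}(\rn,w_\sigma)$ by \eqref{2.1}, then $W_\sigma I_s f\in B^0_{p'}(\rn)$ by \eqref{2.2}; the base Hausdorff--Young embedding \eqref{2.5} then yields $F(W_\sigma I_s f)\in B^0_p(\rn)$, after which $I_{-\sigma}$ lifts this into $B^\sigma_p(\rn)$ and $W_{-s}$ reinstalls the weight $w_s$, landing in $B^\sigma_p(\rn,w_s)$ as required. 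Continuity is automatic since each of the five constituent maps is continuous with norm equivalence constants depending only on the indicated parameters.

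For part (ii) the same factorization \eqref{2.8} works verbatim, but I would replace the base embedding \eqref{2.5} by the classical Hausdorff--Young inequality \eqref{1.9} together with the Littlewood--Paley identification $H^0_p(\rn)=F^0_{p,2}(\rn)=L_p(\rn)$, valid for $1<p<\infty$. Since $2\le p<\infty$ forces $p'\in(1,2]$, both endpoints $H^0_{p'}(\rn)$ and $H^0_p(\rn)$ coincide with the corresponding Lebesgue spaces, so \eqref{1.9} gives directly $F:H^0_{p'}(\rn)\hra H^0_p(\rn)$, and one chases $f$ through \eqref{2.8} inside the $H$-scale exactly as in part (i).

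The only potential obstacle is a careful check that the identity \eqref{2.4} is not merely a formal rewrite on $S'(\rn)$ but actually makes sense step by step within the relevant weighted spaces, i.e.\ that each intermediate object indeed lies in the space claimed and not just in $S'(\rn)$. This however is immediate from the fact that \eqref{2.1} and \eqref{2.2} are isomorphisms (with equivalence of norms and not just continuity), so the chain in \eqref{2.8} preserves membership at every intermediate stage. Beyond this bookkeeping the argument is essentially a one-line reduction to the unweighted, zero-smoothness Hausdorff--Young inequality.
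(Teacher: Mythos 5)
Your proposal is correct and coincides with the paper's own proof: the same factorization $F = W_{-s}\circ I_{-\sigma}\circ F\circ W_\sigma\circ I_s$ with the same five-step chain through $B^0_{p'}(\rn,w_\sigma)$, $B^0_{p'}(\rn)$, $B^0_p(\rn)$ and $B^\sigma_p(\rn)$, and the same reduction of part (ii) to \eqref{1.9} via $H^0_{p'}(\rn)=L_{p'}(\rn)$. Nothing further is needed.
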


\begin{proof}
We prove part (i) by reduction of \eqref{2.6} to \eqref{2.5}, based on \eqref{2.1}, \eqref{2.2},
\begin{\eq}   \label{2.8}
\begin{aligned}
I_s : \quad & B^s_{p'} (\rn, w_\sigma) && \hra B^0_{p'} (\rn, w_\sigma), \\
W_\sigma : \quad & B^0_{p'} (\rn, w_\sigma) && \hra B^0_{p'} (\rn), \\
F: \quad & B^0_{p'} (\rn) && \hra B^0_p (\rn), \\
I_{-\sigma}: \quad & B^0_p (\rn) && \hra B^\sigma_p (\rn), \\
W_{-s}: \quad & B^\sigma_p (\rn) && \hra B^\sigma_p (\rn, w_s).
\end{aligned}
\end{\eq}
Then \eqref{2.6} follows from \eqref{2.4}. The proof of \eqref{2.7} relies on \eqref{1.9} and $H^0_{p'} (\rn) = L_{p'} (\rn)$. 
\end{proof}

\subsection{Homogeneous modifications}  \label{S2.2}
If one replaces $f$ in \eqref{1.1} by $f(\lambda \cdot)$, $\lambda >0$, then one has
\begin{\eq}   \label{2.9}
\big( Ff (\lambda \cdot) \big) (\xi) = \lambda^{-n} (Ff)( \lambda^{-1} \xi), \qquad f\in S'(\rn), \quad \xi \in \rn,
\end{\eq}
appropriately interpreted. Inserted in the Hausdorff--Young inequality \eqref{1.9} one obtains
\begin{\eq}  \label{2.10}
\| \wh{f(\lambda \cdot)}(\xi) \,| L_{p} (\rn) \| = \lambda^{-n + \frac{n}{p}} \| \wh{f} \, | L_p (\rn) \| \le c \, \|f (\lambda \cdot)
\,| L_{p'} (\rn) \|,
\end{\eq}
$2 \le p \le \infty$, $\frac{1}{p} + \frac{1}{p'} =1$. This homogeneity does not play any role in Theorem \ref{T2.1}. But it suggests
to replace the inhomogeneous spaces $A^s_p (\rn)$, $A \in \{B,H \}$, $1\le p \le \infty$, $s\in \real$, by their homogeneous 
counterparts $\dot{A}^s_p (\rn)$. But to avoid the the usual bitter fighting modulo polynomials one can rely on the related tempered
homogeneous spaces $\Aa{}^s_p (\rn)$, $A\in \{B,H \}$, within the distinguished strip
\begin{\eq}   \label{2.11}
1 \le p \le \infty, \qquad - \frac{n}{p'} <s < \frac{n}{p}, \qquad \frac{1}{p} + \frac{1}{p'} =1,
\end{\eq}
in the framework of the dual pairing $\big( S(\rn), S'(\rn) \big)$, based on \cite{T15} and summarized partly in  \cite[Sections 4.1,
4.2]{T20}.

Let $w^\alpha (x) = |x|^\alpha$, $\alpha \in \real$, $x\in \rn$, be the homogeneous counterpart of the above weight $w_\alpha (x) =
(1 + |x|^2)^{\alpha/2}$ and let
\begin{\eq}   \label{2.12}
\| f \,| L_p (\rn, w^\alpha) \| = \| w^\alpha f \, | L_p (\rn) \|, \qquad 1 \le p \le \infty, \qquad \alpha \in \real,
\end{\eq}
in modification of  \cite[(2.169)--(2.171), p.\,37]{T15} where we already dealt with these weights. Let 
\begin{\eq}   \label{2.13}
- \frac{n}{p} < \alpha < \frac{n}{p'} \quad \text{where} \quad 1 \le  p \le \infty, \quad \frac{1}{p} + \frac{1}{p'} =1,
\end{\eq}
characterizing under which conditions $|x|^{\alpha p}$ belongs to the Muckenhoupt class $\Ac_p (\rn)$.  Then the first embedding in
\begin{\eq}   \label{2.14}
S(\rn) \hra L_p (\rn, w^\alpha) \hra S'(\rn)
\end{\eq}
follows from $\alpha > - \frac{n}{p}$. As for the second embedding we remark that one has for $f \in L_p (\rn, w^\alpha)$ and $\vp \in
S(\rn)$
\begin{\eq}   \label{2.15}
\Big| \int_{\rn} f(x)\,\vp(x) \, \di x \Big| \le \| f \, | L_p (\rn, w^\alpha ) \| \Big( \int_{\rn} |x|^{-\alpha p'} |\vp (x)|^{p'}
\di x \Big)^{1/p'}.
\end{\eq}
Then $\alpha p' <n$ justifies the right--hand side of \eqref{2.14}. In particular the duality
\begin{\eq}   \label{2.16}
L_p (\rn, w^\alpha)' = L_{p'} (\rn, w^{-\alpha} ), \qquad 1 \le p < \infty, \qquad \frac{1}{p} + \frac{1}{p'} =1,
\end{\eq}
can be interpreted  in the framework of the dual pairing $\big( S(\rn),S'(\rn) \big)$. 

Otherwise we rely on the notation and properties as described in \cite{T15}. In particular the well--known Gauss--Weierstrass
semi--group
\begin{\eq}   \label{2.17}
W_t w(x) = \frac{1}{(4\pi t)^{n/2}} \int_{\rn} e^{- \frac{|x-y|^2}{4t}} w(y) \, \di y = \frac{1}{(4\pi t)^{n/2}} \Big(w, e^{- \frac{|x-y|^2}{4t}} \Big), \qquad t>0,
\end{\eq}
$w\in S'(\rn)$, can be written on the Fourier side as
\begin{\eq}   \label{2.18}
\wh{W_t w}({\xi}) = e^{-t |\xi|^2} \wh{w} (\xi), \qquad \xi \in \rn.
\end{\eq}
Let (temporarily) 
\begin{\eq}   \label{2.19}
0<p \le \infty, \quad 0<q \le \infty \quad \text{and} \quad n \big( \frac{1}{p} - 1 \big) <s< \frac{n}{p}.
\end{\eq}
Then the related tempered homogeneous spaces $\Bas (\rn)$ admit the equivalent domestic quasi--norms
\begin{\eq}   \label{2.20}
\| f \, | \Bas (\rn) \|_m = \Big( \int^\infty_0 t^{(m- \frac{s}{2})q} \big\| \pa^m_t W_t f \, | L_p (\rn) \big\|^q \, \frac{\di t}{t}
\Big)^{1/q}
\end{\eq}
where $s/2 <m \in \no$ (usual modification if $q=\infty$). This is covered by \cite[Theorem 3.24, pp.\,79--80]{T15} and (including
$p=\infty$) \cite[Definition 4.6, pp.\,121--122]{T20}. There one finds also the necessary explanations and properties including the
counterpart
\begin{\eq}   \label{2.21}
S(\rn) \hra \Bas (\rn) \hra S' (\rn)
\end{\eq}
of \eqref{2.14}. Similarly for $\Fas (\rn)$, $p<\infty$, and in particular for the tempered homogeneous Sobolev spaces
\begin{\eq}   \label{2.22}
\Ha{}^s_p (\rn) = \Fa{}^s_{p,2} (\rn), \qquad 1<p<\infty.
\end{\eq}
If $p=\infty$ then one can choose $m=0$ and $\Ca{}^s (\rn) = \Ba{}^s_{\infty, \infty} (\rn)$, $-n <s<0$, can be introduced as the
collection of all $f\in S'(\rn)$ such that
\begin{\eq}   \label{2.23}
\| f \, |\Ca{}^s (\rn) \| = \sup_{t>0, x\in \rn} t^{-s/2} |W_t f(x) |
\end{\eq}
is finite. Let again $\Ba{}^s_{p,p} (\rn) = \Ba{}^s_p (\rn)$.

\begin{proposition}    \label{P2.2}
Let
\begin{\eq}   \label{2.24}
2 \le p \le \infty, \quad \frac{1}{p} + \frac{1}{p'} =1 \quad \text{and} \quad - \frac{n}{p'} <s< \frac{n}{p}.
\end{\eq}
Then
\begin{\eq}    \label{2.25}
F: \quad L_{p'} (\rn, w^s) \os{F}{\hra} \Ba{}^s_{p, p'} (\rn) \os{\id}{\hra} \Ba{}^s_p (\rn).
\end{\eq}
\end{proposition}

\begin{proof} {\em Step 1.} By \eqref{2.13}, \eqref{2.14} one has
\begin{\eq}    \label{2.26}
S(\rn) \hra L_{p'} (\rn, w^s) \hra S'(\rn)
\end{\eq}
and the homogeneity
\begin{\eq}   \label{2.27}
\| f(\lambda \cdot) \,| L_{p'} (\rn, w^s) \| = \lambda^{-s- \frac{n}{p'}} \| f\,| L_{p'} (\rn, w^s) \|,
\end{\eq}
$\lambda >0$. According to \cite[Theorem 3.24, (3.193), pp.\,79--80]{T15} the homogeneity exponent for $\Bas (\rn)$ is $s- \frac{n}{p}
$. Combined with \eqref{2.9} one has
\begin{\eq}   \label{2.28}
\| \wh{f(\lambda \cdot)} \, | \Ba{}^s_{p,p'} (\rn) \| = \lambda^{-n-s + \frac{n}{p}} \| f \,| \Ba{}^s_{p, p'} (\rn) \|,
\end{\eq}
$\lambda >0$. This shows that the claim \eqref{2.25} takes place within the dual pairing $\big( S(\rn), S'(\rn) \big)$ and that all
spaces have the same homogeneity. The last embedding in \eqref{2.25} follows from the monotonicity of $\Bas(\rn)$ with respect to $q$
(for fixed $s$ and $p$) which is the same as for the spaces $\Bs (\rn)$. This follows from the equivalent Fourier--analytical domestic
norms according to \cite[Theorems 3.3, 3.24, pp.\,48--49, 79--80]{T15}.
\cm
{\em Step2.} We prove the first mapping in \eqref{2.25}. Replacing $f$ in
\begin{\eq}   \label{2.29}
\pa^m_t W_t f (x) =  \big( (-1)^m |\xi|^{2m} e^{-t |\xi|^2} f^\vee \big)^\wedge  (x), \qquad x\in \rn,
\end{\eq}
by $\wh{f}$ then one obtains by \eqref{2.20} that
\begin{\eq}    \label{2.30}
\| \wh{f} \, | \Ba{}^s_{p,p'} (\rn) \|^{p'}_m = \int^\infty_0 t^{(m- \frac{s}{2})p'} \big\| \big( |\xi|^{2m} e^{-t|\xi|^2} f(\xi) 
\big)^\wedge \, |L_p (\rn) \big\|^{p'} \, \frac{\di t}{t}.
\end{\eq}
Application of the classical Hausdorff--Young inequality  \eqref{1.9} shows that
\begin{\eq}   \label{2.31}
\begin{aligned}
\| \wh{f} \, | \Ba{}^s_{p,p'} (\rn) \|^{p'}_m &\le c \int_{\rn} |\xi|^{2mp'} |f(\xi)|^{p'} \int^\infty_0 t^{(m- \frac{s}{2})p'}
e^{-t |\xi|^2 p'} \, \frac{\di t}{t} \, \di \xi \\
& = c \, \int_{\rn} |\xi|^{sp'} |f(\xi)|^{p'} \int^\infty_0 t^{(m- \frac{s}{2})p'} e^{-t p'} \frac{\di t}{t} \, \di \xi \\
&\sim \Gamma\big( p'(m - \frac{s}{2}) \big) \, \|f \, | L_{p'} (\rn, w^s) \|^{p'}
\end{aligned}
\end{\eq}
where $\Gamma(\tau)$ is the classical $\Gamma$--function. This proves the first mapping  in \eqref{2.25}.
\end{proof}

\begin{remark}   \label{R2.3}
The functional equation $z \Gamma (z) = \Gamma(z+1)$ for $z \in \comp$ with $\Rea z >0$ shows that one can replace $\Gamma \big( p'
(m - \frac{s}{2}) \big)$ in \eqref{2.31} by $\big( m - \frac{s}{2} \big)^{-1}$. Of special interest might be $s<0$ and $m=0$. Then
one has
\begin{\eq}   \label{2.32}
|s| \, \big\| \wh{f} \, | \Bas (\rn) \big\|^{p'}_0 \le c \| f \, | L_{p'} (\rn, w^s) \|^{p'}
\end{\eq}
where $c>0$ can be chosen independently of $s$ and $p' \le q \le \infty$. One may ask what happens if $s \uparrow 0$. Questions of this
type are quite fashionable nowadays. Does it converge to the classical Hausdorff--Young inequality? At least for $p=2$ one can say
the following. Let $- \frac{n}{2} <s<0$. Then one has by \eqref{2.19}, \eqref{2.20},
\begin{\eq}  \label{2.32a}
\| f \, | \Ha{}^s (\rn) \|^2_0 = \int^\infty_0 t^{-s} \|W_t f \,| L_2 (\rn) \|^2 \, \frac{\di t}{t}
\end{\eq}
where $\Ha{}^s (\rn) = \Ha{}^s_2 (\rn) = \Ba{}^s_2 (\rn) = \Ba{}^s_{2,2} (\rn)$. Inserting
\begin{\eq}    \label{2.32b}
\| W_t f \,| L_2 (\rn) \|^2 = \| (e^{-t |\xi|^2} \wh{f} )^\vee | L_2 (\rn) \|^2 = \| e^{t|\xi|^2} \wh{f} \, | L_2 (\rn) \|^2
\end{\eq}
one obtains
\begin{\eq}   \label{2.32c}
\begin{aligned}
\|f \, | \Ha{}^{s} (\rn) \|^2_0 = & \int_{\rn} |\wh{f} (\xi)|^2 \int^\infty_0 t^{-s} e^{-2t |\xi|^2} \, \frac{\di t}{t} \, \di \xi \\
&= \int_{\rn} 2^s |\xi|^{2s} |\wh{f} (\xi)|^2 \int^\infty_0 \tau^{-s} e^{-\tau} \frac{\di \tau}{\tau} \, \di \xi \\
& = 2^s \Gamma(|s|) \int_{\rn} |\xi|^{2s} |\wh{f} (\xi)|^2 \, \di \xi.
\end{aligned}
\end{\eq}
Using again $\Gamma (|s|) = |s|^{-1} (1 + o(|s|))$ with $o(|s|) \to 0$ if $|s| \to 0$ then it follows from Lebesgue's bounded 
convergence theorem that one has at least for $f\in S(\rn)$
\begin{\eq}   \label{2.32d}
\lim_{s \uparrow 0} \sqrt{|s|} \| f \, | \Ha{}^s (\rn) \|_0 = \| f \, | L_2 (\rn) \|.
\end{\eq}
The extension of \eqref{2.32} for $q=p'$ to $p' \le q \le \infty$ is not totally obvious because of the claimed independence of $c $ of
$s$ with $-\frac{n}{p'} <s<0$. But this follows from the possibility to discretize \eqref{2.20} with $m=0$ for $1<p \le \infty$, $1\le
q \le \infty$ and $-\frac{n}{p'}<s<0$ by
\begin{\eq}   \label{2.32e}
\begin{aligned}
\| f \, | \Bas (\rn) \|_0 &= \Big( \int^\infty_0 t^{\frac{|s|}{2}q} \| W_t f \, | L_p (\rn) \|^q \, \frac{\di t}{t} \Big)^{1/q} \\
&\sim \Big( \sum_{j\in \ganz} 2^{j \frac{|s|}{2}q} \| W_{2^j} f\, | L_p (\rn) \|^q \Big)^{1/q}
\end{aligned}
\end{\eq}
where the equivalence constants  can be chosen independently  of $s$, and the monotonicity of the $\ell_q$--spaces. Here \eqref{2.32e}
relies on
\begin{\eq}   \label{2.32f}
\| W_{2^{j+1}} f \, | L_p (\rn) \| \le \| W_t f \, | L_p (\rn) \| \le \| W_{2^j} f \, | L_p (\rn) \|
\end{\eq}
for $2^j \le t \le 2^{j+1}$, $j \in \ganz$. Finally we mention that \eqref{2.25} can be complemented by
\begin{\eq}   \label{2.32g}
F: \quad L_{p'} (\rn, w^s) \hra \Bas (\rn)
\end{\eq}
if, and only if, $q \ge p'$. If $q \ge p'$ then \eqref{2.32g} follows from  \eqref{2.25} and the monotonicity  of the spaces $\Bas 
(\rn)$ with respect to $q$ which is the same as for the inhomogeneous spaces $\Bs (\rn)$. Let $\psi$ be a non--trivial $C^\infty$
function with, say, $\supp \psi \subset \{ x: |x| <1 \}$. Let
\begin{\eq}   \label{2.32h}
\psi_j (x) = \psi (x-x^j) \quad \text{ such that} \quad \supp \psi_j \subset \{x: 2^j  <|x| < 2^{j+1} \},
\end{\eq}
$j\in \nat$. Let $\lambda = \{ \lambda_j \}_{j\in \nat} \subset \comp$ and
\begin{\eq}  \label{2.32i}
f_\lambda = \sum_{j\in \nat}  \lambda_j \psi_j.
\end{\eq}
Then
\begin{\eq}   \label{2.32j}
\| f_\lambda \, | L_{p'} (\rn, w^s) \| \sim \Big( \sum_{j\in \nat} 2^{jsp'} |\lambda_j |^{p'} \Big)^{1/p'}.
\end{\eq}
Inserting $\wh{f_\lambda}$ in the equivalent domestic Fourier--analytic norm for $\Bas (\rn)$ with $1\le q <\infty$ one obtains
\begin{\eq}   \label{2.32k}
\| \wh{f_\lambda} \, | \Bas (\rn) \| \sim \Big(\sum_{j\in \nat} 2^{jsq} |\lambda_j |^q \| (\vp_j  \psi_j )^\wedge \, | L_p (\rn)
\|^q \Big)^{1/q}.
\end{\eq}
We may assume that $\vp_j \psi_j = \psi_j$. With $\wh{\psi_j} (\xi) = e^{-i \xi x^j} \psi(\xi)$ it follows from \eqref{2.32k} that
\begin{\eq}   \label{2.32l}
\| \wh{f_\lambda} \, | \Bas (\rn) \| \sim \Big( \sum_{j\in \nat} 2^{jsq} |\lambda_j |^q \Big)^{1/q}.
\end{\eq}
Then \eqref{2.32g} requires $q \ge p'$. 
\end{remark}

One can extend the above assertions to the Sobolev spaces according to \eqref{2.22}. Furthermore one can apply duality to change the
role of source spaces and target spaces in \eqref{2.25}, excluding now $p= \infty$.

\begin{theorem}   \label{T2.4}
Let
\begin{\eq}    \label{2.33}
2 \le p <\infty, \quad \frac{1}{p} + \frac{1}{p'} =1 \quad \text{and} \quad - \frac{n}{p'} <s< \frac{n}{p}.
\end{\eq}
Then  
\begin{\eq}   \label{2.34}
F: \quad L_{p'} (\rn, w^s) \os{F}{\hra} \Ba{}^s_{p,p'} (\rn) \os{\id}{\hra} \Ha{}^s_p  (\rn) \os{\id}{\hra} \Ba{}^s_p (\rn)
\end{\eq}
and
\begin{\eq}    \label{2.35}
F: \quad \Ba{}^{-s}_{p'} (\rn) \os{\id}{\hra} \Ha{}^{-s}_{p'} (\rn) \os{\id}{\hra} \Ba{}^{-s}_{p',p} (\rn) \os{F}{\hra} L_p 
(\rn, w^{-s}).
\end{\eq}
\end{theorem}

\begin{proof}
The mappings in \eqref{2.34} follow from \eqref{2.25}, $p<\infty$, and the additional embeddings which are the same as for the related
inhomogeneous spaces according to \cite[(3.44), p.\,52]{T15}, extended to $s$ in \eqref{2.33}, and \eqref{2.22}. The duality
\begin{\eq}   \label{2.36}
\Ba{}^s_p (\rn)' = \Ba{}^{-s}_{p'} (\rn) \quad \text{and} \quad \Ha{}^s_p (\rn)' = \Ha{}^{-s}_{p'} (\rn)
\end{\eq}
for
\begin{\eq}   \label{2.37}
1<p<\infty, \quad \frac{1}{p} + \frac{1}{p'} =1 \quad \text{and} \quad - \frac{n}{p'} <s< \frac{n}{p}
\end{\eq}
within the dual pairing $\big( S(\rn), S'(\rn) \big)$ can be proved in the same way as for the corresponding inhomogeneous spaces in
\cite[Theorem 2.11.2, p.\,178]{T83} based on related domestic Fourier--analytical norms according to 
\cite[Theorem 3.24, pp.\,79--80]{T15}. Then \eqref{2.35} follows from \eqref{2.34}, \eqref{2.36} and \eqref{2.16}. The restrictions
for $s$ in \eqref{2.33} and $- \frac{n}{p} < -s < \frac{n}{p'}$ ensure that all $B$--spaces and all $H$--spaces in \eqref{2.34} and
\eqref{2.35} are within the distinguished strip.
\end{proof}

\begin{remark}   \label{R2.5}
Assertions  of type \eqref{2.35} in the context of homogeneous spaces have also been treated recently in \cite{Bou22}, called Szasz
theorems with a reference to \cite{Pee76} as far as this notation is concerned.
\end{remark}

\begin{problem}    \label{P2.6}
One may compare Theorem \ref{T2.1} for the inhomogeneous weighted spaces $A^s_p (\rn, w_\sigma)$, $A \in \{B,H \}$, with their 
homogeneous counterparts in Proposition \ref{P2.2} and Theorem \ref{T2.4}. So far we restricted there weights to the related $L_p$--spaces. However some extensions seem to be possible. For this purpose one may use the lifts as discussed in \cite[Section 3.14, 
pp.\,97--98]{T15}. In addition we already introduced in \cite[Section 2.6.4, pp.\,36--41]{T15} weighted spaces $\Aas (\rn, w^\alpha)$
and described some properties. On this basis it seems to be possible to elaborate the above theory. In particular one may ask for a
homogeneous weighted counterpart of \eqref{2.4}.
\end{problem}

\section{An application}      \label{S3}
In modification of \cite{T22} we dealt in \cite{HST23} with mapping properties of
\begin{\eq}   \label{3.1}
F: \quad SA^{s_1}_p (\rn) \hra TA^{s_2}_p (\rn)
\end{\eq}
where  $A \in \{B,H \}$, $1 \le p \le \infty$ ($1<p<\infty$ for $H$--spaces),
\begin{\eq}   \label{3.2}
SA^s_p (\rn) = A^{s^n_p +s}_p (\rn) \quad \text{and} \quad TA^s_p (\rn) = A^{t^n_p -s}_p (\rn)
\end{\eq}
with $s\in \real$,
\begin{\eq}    \label{3.3}
d^n_p = 2n \big( \frac{1}{p} - \frac{1}{2} \big) \quad \text{and} \quad s^n_p = \max (0, d^n_p), \quad t^n_p = \min (0, d^n_p).
\end{\eq}

\begin{theorem}   \label{T3.1}
Let $A \in \{B,H \}$, $1 \le p \le \infty$ $(1<p<\infty$ for $H$--spaces$)$ and $s_1 \in \real$, $s_2 \in \real$. Then there is a
continuous mapping
\begin{\eq}   \label{3.4}
F: \quad SA^{s_1}_p (\rn) \hra TA^{s_2}_p (\rn)
\end{\eq}
if, and only if, both $s_1 \ge 0$ and $s_2 \ge 0$. This mapping is compact if, and only if, both $s_1 >0$ and $s_2 >0$.
\end{theorem}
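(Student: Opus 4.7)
The plan is to derive the continuity part of Theorem~\ref{T3.1} from the weighted Hausdorff--Young inequality of Theorem~\ref{T2.1} by a sandwich construction, supply the necessity via test-function scaling, and obtain the compactness by inserting strict slack into that sandwich. For continuity I treat first $p\ge 2$ and $A=B$, where $s^n_p =0$ and $t^n_p = d^n_p$, so the claim reduces to $F: B^{s_1}_p(\rn)\hra B^{d^n_p - s_2}_p(\rn)$ for $s_1, s_2\ge 0$. Applying Theorem~\ref{T2.1}(i) with $s=s_1$ and $\sigma=d^n_p - s_2$ gives
\[
F:\quad B^{s_1}_{p'}(\rn, w_{d^n_p - s_2}) \hra B^{d^n_p - s_2}_p(\rn, w_{s_1}),
\]
and I would sandwich this between two elementary embeddings. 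On the source side, $B^{s_1}_p(\rn) \hra B^{s_1}_{p'}(\rn, w_{d^n_p - s_2})$ is a weighted H\"older-type embedding (at the $L_p$-level, $w_{d^n_p - s_2}\in L_r(\rn)$ with $1/r=1/p'-1/p$, valid for $s_2>0$, and then transferred to the Besov scale by the standard equivalent norms of weighted spaces); on the target side, $B^{d^n_p-s_2}_p(\rn, w_{s_1}) \hra B^{d^n_p-s_2}_p(\rn)$ reduces to the fact that $w_{-s_1}\le 1$ is a bounded pointwise multiplier for $s_1\ge 0$. The case $p\le 2$ is symmetric, moving the H\"older step to the target side, and the Sobolev variant ($A=H$) follows in the same way from Theorem~\ref{T2.1}(ii).

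For necessity I would test $F$ against a bump $\vp\in S(\rn)$ in three standard forms: modulation $f_\xi(x) = e^{ix\cdot\xi}\vp(x)$ with $|\xi|\to\infty$, translation $f_{x_0}(x) = \vp(x-x_0)$ with $|x_0|\to\infty$, and dilation $f_\lambda(x) = \vp(\lambda x)$ in the limits $\lambda\to\infty$ and $\lambda\to 0^+$. The modulation gives $\|f_\xi\|_{A^s_p}\sim|\xi|^s$ against a uniformly bounded $\|Ff_\xi\|_{A^\sigma_p}$, forcing $s=s^n_p + s_1\ge 0$; translation gives the dual constraint $\sigma=t^n_p - s_2\le 0$; and the dilation asymptotics (via $\|f_\lambda\|_{A^s_p}\sim\lambda^{s-n/p}$ for large $\lambda$ and $\|Ff_\lambda\|_{A^\sigma_p}\sim\lambda^{-n/p'}$) close the gap in those ranges of $p$ where modulation or translation alone yield only the weaker inequalities $s_1\ge -s^n_p$ or $s_2\ge t^n_p$. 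Together these tests collapse exactly to $s_1\ge 0$ and $s_2\ge 0$.

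For compactness under the strict conditions $s_1, s_2>0$, I would insert slack into the sandwich (for instance, replace $d^n_p - s_2$ by $d^n_p - s_2 - \ve$ for small $\ve>0$), so that one of the H\"older or weight-dropping embeddings acquires a strict gap in both smoothness and weight. Standard compact embedding results for weighted Besov--Sobolev spaces on $\rn$ then turn that factor into a compact operator, so the composition is compact. Non-compactness at the boundary is witnessed by spatial translates $f_n(x)=\vp(x-ne_1)$ for $s_1=0$: they are bounded in $SA^0_p(\rn)$, while the images $Ff_n(\xi)=e^{-ine_1\cdot\xi}F\vp(\xi)$ admit no convergent subsequence in $TA^{s_2}_p(\rn)$; the case $s_2=0$ follows by Fourier--duality, interchanging $(s_1,p)\leftrightarrow(s_2,p')$.

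The main obstacle is the boundary of the continuity statement, namely $s_1=0$ or $s_2=0$: the H\"older ingredient $L_p\hra L_{p'}(\rn, w_\sigma)$ requires the strict inequality $\sigma<d^n_p$, so the sandwich above does not directly reach these endpoints. To cover them one needs either a complex interpolation between nearby non-degenerate parameter pairs or a fallback to the direct arguments in \cite{HST23} and \cite{T22}.
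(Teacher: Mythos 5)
You should first note what this note actually proves: Theorem \ref{T3.1} is stated as coinciding with \cite[Theorems 3.1 and 3.2]{HST23}, and the only part reproved here is the necessity of $s_1\ge 0$, $s_2\ge 0$ (Proposition \ref{[P3.3}), via the weighted Hausdorff--Young inequality \eqref{2.6} combined with the embedding criteria of \cite[Theorem 6.7]{T06}. Your test-function route to necessity (modulation, translation, dilation) is a genuinely different and workable alternative to that. The serious problem is your continuity argument. The sandwich through Theorem \ref{T2.1} needs the source embedding $B^{s_1}_p(\rn)\hra B^{s_1}_{p'}(\rn,w_{d^n_p-s_2})$, and, as you concede, the H\"older step $w_\sigma\in L_r(\rn)$, $1/r=1/p'-1/p$, forces the strict inequality $\sigma<d^n_p$, i.e.\ $s_2>0$; at $s_2=0$ the weight $w_{d^n_p}$ just fails to lie in $L_r(\rn)$, so the embedding genuinely breaks down rather than merely eluding your method. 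This endpoint cannot be rescued by interpolation: the admissible region is the closed quadrant $s_1\ge 0$, $s_2\ge 0$, so any interpolation couple whose interior point is $(s_1,0)$ must contain a parameter with $s_2<0$, where the mapping does not exist. Since the boundary cases are exactly the substantive content of \eqref{3.4} (they are what \cite{T22} works hardest for), the fallback ``use the direct arguments in \cite{HST23} and \cite{T22}'' concedes the theorem rather than proving it.

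There is also a concrete error in your non-compactness witness. Translates $f_n(x)=\vp(x-ne_1)$ have Fourier images $e^{-ine_1\cdot\xi}\wh{\vp}(\xi)$, and for $2<p\le\infty$, $s_1=0$, $s_2>0$ the target smoothness is $\sigma=d^n_p-s_2<0$, so $\|e^{-ine_1\cdot\xi}\wh{\vp}\,|B^{\sigma}_p(\rn)\|\sim n^{\sigma}\to 0$: the image sequence converges to zero in the target and witnesses nothing. The correct witness for $s_1=0$ is the \emph{modulation} sequence $e^{inx_1}\vp(x)$, which is bounded in $B^{0}_p(\rn)$ while its images are translates of $\wh{\vp}$, of constant positive norm in every $B^{\sigma}_p(\rn)$ and without convergent subsequence; your duality reduction for $s_2=0$ must then be run with this corrected witness. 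Finally, in the compactness ``if'' direction the slack has to be placed in the H\"older-type factor so that both the smoothness and the weight acquire a strict gap: the target-side embedding $B^{d^n_p-s_2}_p(\rn,w_{s_1})\hra B^{d^n_p-s_2}_p(\rn)$ has a weight gap but no smoothness gap and is therefore not compact on its own.
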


\begin{remark}   \label{R3.2}
This coincides with \cite[Theorems 3.1 and 3.2]{HST23}. Here either $A=B$ on both sides of \eqref{3.4} or $A=H$ on both sides of
\eqref{3.4}.
\end{remark}

The proof in \cite{T22} that the continuity of the mapping
\begin{\eq}   \label{3.5}
F: \quad B^{s_1}_p (\rn) \hra B^{d^n_p -s_2}_p (\rn), \qquad 2 \le p \le \infty,
\end{\eq}
requires both $s_1 \ge 0$ and $s_2 \ge 0$ relies on preceding compactness assertions and interpolation. The $H$--spaces can be 
incorporated afterwards, whereas the related counterpart  for $1\le p \le 2$ ($1<p \le 2$ for $H$--spaces) is a matter of duality
as detailed in \cite{T22} and \cite{HST23}. We give a new proof of this claim which relies on the extended Hausdorff--Young inequality
according to Theorem \ref{T2.1}. It is again sufficient to concentrate on \eqref{3.5}.

\begin{proposition}   \label{[P3.3}
Let $A \in \{B, H \}$, $1 \le p \le \infty$ $(1 <p<\infty$ for $H$--spaces$)$ and $s_1 \in \real$, $s_2 \in \real$. If there is a 
continuous mapping
\begin{\eq}   \label{3.6}
F: \quad SA^{s_1}_p (\rn) \hra TA^{s_2}_p (\rn)
\end{\eq}
then both $s_1 \ge 0$ and $s_2 \ge 0$.
\end{proposition}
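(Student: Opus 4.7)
The plan is to reduce, as Triebel announces, to the case $F \colon B^{s_1}_p (\rn) \hra B^{d^n_p - s_2}_p (\rn)$ with $2 \le p \le \infty$, and then to apply Theorem \ref{T2.1} just once to obtain a single weighted Besov embedding from which both $s_1 \ge 0$ and $s_2 \ge 0$ can be read off. The mechanism is that $F \circ F = R$ (reflection) acts as an isometric isomorphism on every weighted Besov space $B^\sigma_p (\rn, w_\alpha)$, because the admissible weights $w_\alpha$ are even.

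Concretely, I would apply Theorem \ref{T2.1}(i) with $s = 0$ and $\sigma = s_1$ to get the continuous mapping
\begin{equation*}
F \colon B^0_{p'} (\rn, w_{s_1}) \hra B^{s_1}_p (\rn).
\end{equation*}
Composing this with the hypothesis $F \colon B^{s_1}_p (\rn) \hra B^{d^n_p - s_2}_p (\rn)$ and using $F \circ F = R$ then produces the single embedding
\begin{equation*}
B^0_{p'} (\rn, w_{s_1}) \hra B^{d^n_p - s_2}_p (\rn).
\end{equation*}

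From this one embedding I would extract both inequalities by inserting two natural families of test functions. For $s_2 \ge 0$ I would take $\phi \in C^\infty_c (\rn)$ supported near the origin and use the dilations $\phi_\lambda (x) = \phi (\lambda x)$ with $\lambda \to \infty$: on the shrinking support of $\phi_\lambda$ the weight $w_{s_1}$ is essentially $1$, so the embedding reduces to the unweighted Sobolev embedding $B^0_{p'} (\rn) \hra B^{d^n_p - s_2}_p (\rn)$, whose dimensional balance $0 - n/p' \ge (d^n_p - s_2) - n/p$ (using $n/p' - n/p = -d^n_p$) gives exactly $s_2 \ge 0$. For $s_1 \ge 0$ I would take a fixed $\phi \in C^\infty_c (\rn)$ and translate, $\phi_y (x) = \phi (x - y)$ with $|y| \to \infty$: the target norm $\| \phi_y \, | B^{d^n_p - s_2}_p (\rn) \|$ is independent of $y$ by translation invariance, while $\| \phi_y \, | B^0_{p'} (\rn, w_{s_1}) \| = \| w_{s_1} \phi_y \, | B^0_{p'} (\rn) \| \sim |y|^{s_1} \| \phi \, | B^0_{p'} (\rn) \|$ for large $|y|$, so the embedding is compatible with $|y| \to \infty$ only if $s_1 \ge 0$.

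The main bookkeeping obstacle is verifying that reflection acts isometrically on $B^0_{p'} (\rn, w_{s_1})$ (immediate from $w_{s_1}(-x) = w_{s_1}(x)$) and that $F \circ F = R$ in $S' (\rn)$ really transfers to an embedding between the spaces in question; both points are routine because Theorem \ref{T2.1} and the hypothesis are continuous mappings on $S' (\rn)$. The reduction of the proposition to the representative case \eqref{3.5}, namely the passage to $H$--spaces, to the endpoint $p = \infty$, and to the dual range $1 \le p \le 2$, is carried out by the duality and embedding arguments already recalled in \cite{T22} and \cite{HST23}.
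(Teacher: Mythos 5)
Your first half coincides with the paper's proof: Triebel also applies Theorem \ref{T2.1}(i) with $s=0$, $\sigma=s_1$ to get $F^{-1}\colon B^0_{p'}(\rn,w_{s_1})\hra B^{s_1}_p(\rn)$ (your detour through $F\circ F=R$ and the reflection-invariance of the weighted spaces is an immaterial variant of his use of $F^{-1}$), and composes with the hypothesis \eqref{3.7} to arrive at exactly your embedding $\id\colon B^0_{p'}(\rn,w_{s_1})\hra B^{d^n_p-s_2}_p(\rn)$, which is \eqref{3.9}. Where you diverge is in reading off the two inequalities: the paper simply invokes the known characterization of continuous embeddings between weighted Besov spaces, \cite[Theorem 6.7, p.\,266]{T06}, which yields the weight condition $s_1\ge 0$ and the differential-dimension condition \eqref{3.10}, whereas you rederive both by hand with test functions. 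Your translation argument for $s_1\ge 0$ is sound (it is the standard ``single basis vector'' test behind the cited theorem), and making the argument self-contained is a legitimate gain over a citation.

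The dilation step for $s_2\ge 0$, however, has a gap as written. For a compactly supported bump $\phi$ with $\int\phi\neq 0$, the scaling $\|\phi(\lambda\cdot)\,|B^\tau_p(\rn)\|\sim\lambda^{\tau-n/p}$ holds only for $\tau>-n/p'$; for $\tau<-n/p'$ the low-frequency content dominates and the norm is $\sim\lambda^{-n}$, which is always $\lesssim\lambda^{-n/p'}\sim\|\phi(\lambda\cdot)\,|B^0_{p'}(\rn)\|$, so the comparison yields nothing. Since the relevant target smoothness $\tau=d^n_p-s_2$ is negative in the main case (the paper explicitly reduces to $d^n_p-s_2<0$), you cannot just assert the ``dimensional balance'' from plain dilations. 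Two repairs are available: either replace $\phi(\lambda x)$ by Fourier-localized (modulated) bumps such as $e^{i\lambda x_1}\phi(\lambda x)$, whose Fourier transform sits in a single dyadic annulus so that $\|\cdot\,|B^\tau_p(\rn)\|\sim\lambda^{\tau-n/p}$ for \emph{every} $\tau\in\real$; or observe that since $-n/p'\le d^n_p$, the bad regime $\tau\le -n/p'$ already forces $\tau\le d^n_p$, i.e.\ $s_2\ge 0$, with no argument needed, so your dilations suffice in the only nontrivial regime $\tau>-n/p'$. With either repair your proof is complete and is a genuinely more elementary substitute for the appeal to \cite[Theorem 6.7]{T06}; the reduction to the representative case \eqref{3.5} is deferred to \cite{T22}, \cite{HST23} in both your write-up and the paper's.
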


\begin{proof}
As already said it is sufficient to prove that there is no continuous mapping
\begin{\eq}   \label{3.7}
F: \quad B^{s_1}_p (\rn) \hra B^{d^n_p -s_2}_p(\rn), \qquad 2 \le p \le \infty, \quad d^n_p = \frac{2n}{p} -n ,
\end{\eq}
if either $s_1 <0$ or $s_2 <0$. Let $s_1 \in \real$, $s_2 \in \real$ and $\frac{1}{p} + \frac{1}{p'} =1$. By \eqref{2.6} one has
\begin{\eq}   \label{3.8}
F^{-1}: \quad B^0_{p'} (\rn, w_{s_1} ) \hra B^{s_1}_p (\rn).
\end{\eq}
Combined with the assumed mapping \eqref{3.7} it follows that there would be a continuous mapping 
\begin{\eq}   \label{3.9}
\id: \quad B^0_{p'} (\rn, w_{s_1} ) \hra B^{d^n_p - s_2}_p (\rn).
\end{\eq}
This requires according to \cite[Theorem 6.7, p.\,266]{T06} that $s_1 \ge 0$ (for any $s_2 \in \real$). If $p=2$ then one has by 
\eqref{3.9} that $s_2 \ge 0$ (for any $s_1 \in \real$). If $2<p \le \infty$ then one may assume that $d^n_p - s_2 <0$. Now it follows
again from \cite[Theorem 6.7, p.\,266]{T06} that
\begin{\eq}   \label{3.10}
- \frac{n}{p'} \ge \frac{2n}{p} - n -s_2 - \frac{n}{p}, \quad \text{which means $s_2 \ge 0$}
\end{\eq}
(for any $s_1 \in \real$).
\end{proof}

\end{document}